\newcommand{\be}{\begin{equation}}
\newcommand{\ee}{\end{equation}}
\newcommand{\N}{\mathbb{N}}
\newtheorem{theorem}{Theorem} [section]
\newtheorem{corollary}[theorem]{Corollary}
\newtheorem{proposition}[theorem]{Proposition}
\newtheorem{remark}[theorem]{Remark}
\newtheorem{lemma}[theorem]{Lemma}
\begin{document}

\subjclass[2020]{Primary 11C08,	11A25.}

\thanks{Supported by ANPCyT under grant PICT-2018-03017, and by Universidad de
	Buenos Aires under grant 20020160100002BA. The author is a members of
	CONICET, Argentina.}

\title{An Identity involving the Cyclotomic Polynomials}
\author{Pablo L.  De N\'apoli}
\address{IMAS (UBA-CONICET) and Departamento de Matem\'atica, Facultad de Ciencias Exactas y Naturales, Universidad de Buenos Aires, Ciudad Universitaria, 1428 Buenos Aires, Argentina}
\email{pdenapo@dm.uba.ar}
\maketitle

\begin{abstract}
We present an elementary identity for the cyclotomic polynomials $\Phi_n(x)$ which reflects 
a kind of multiplicative property of $\Phi_n(x)$ as a function of $n$, and we explore its connections with 
the properties of other arithmetical functions. \\

Important Note: In the first version of this article uploaded to the arXiv, it is said that this result  seemed to be new. However, after that, I have learned that this identity in Theorem \ref{theorem-our-identiy} has previously appeared in \cite[corollary 2]{Cheng} (with a different proof).  
\end{abstract}

\section{Introduction and main result}

For each natural number $n \in \N$, let $\Phi_n(X)$ denote the $n$-th cyclotomic polynomial, i.e.: the monic polynomial whose roots are the primitive $n$-th roots of unit. Explicitly 
$$ \Phi_n(X) = \prod_{\stackrel{1 \leq k \leq n}{k \perp n}} \left(X - \zeta_k \right), \quad \zeta_k=  
e^{\frac{2\pi i k}{n}}. $$
Here $k \perp n$ means that $m$ and $n$ are \emph{coprime} or \emph{relatively prime} (a useful notation introduced in \cite[section 4.5]{CM}), i.e.
$$ k \perp n \Leftrightarrow \gcd(k,n)=1 $$

The cyclotomic polynomials are a well-known object in number theory and they also play a key role in field theory, 
see for instance \cite[section 13.6]{Dummit-Foote}. Nice surveys on the subject of cyclotomic polynomials are \cite{thangadurai2000coefficients} and \cite{ge2008elementary}. Also the web page \cite{weisstein2002polynomial} collects some known results and references on them. 

\medskip

In particular, it is known that the cyclotomic polynomials $\Phi_n(X)$ have integral coefficients, and that many important functions in multiplicative number theory are related to them.

\medskip

For instance, it is clear from their definition that their degree is given by $\varphi(n)$, 
Euler's totient function, which counts the number of integers $k$ in the range $1\leq k \leq n$ that are coprime with $n$. Also it is easily seen that the cyclotomic polynomials satisfy the following \emph{Fundamental identity}
\be X^n -1 = \prod_{d|n} \Phi_d(X)  \label{fundamental-identity} \ee
(since every $n$-th root of the unit is a $d$-th primitive root for exactly one $d$ dividing $n$), 
from where we deduce that the cyclotomic polynomials can be computed recursively using the formula
\be \Phi_n(X) = \frac{X^n-1}{\prod_{d|n,d<n} \Phi_d(X) }. \label{recursion} \ee

\medskip

Moreover, if we consider the coefficients $a_k(n)$ of $\Phi_n(X)$, i.e. we write
\be \Phi_n(X) = \sum_{k=0}^{\varphi(n)} a_k(n) X^k \label{def-coefficients}, \ee
we have that
$$ a_1(n)= a_{\varphi(n)-1}(n) = -\mu(n) \; \quad \hbox{for} \; n> 1 $$
where $\mu$ is the Möbius function 
$$ \mu(n) = \left\{
\begin{array}{ll}
1 & \hbox{if} \; n=1 \\
(-1)^k & \hbox{if} \; n=p_1 p_2 \ldots p_k \; \hbox{for distinct primes} \; p_j.\\ 
0 & \hbox{otherwise} \\
\end{array}\right. $$
Indeed, it is well known that $\mu(n)$ gives the sum of the $n$-th primitive roots of the unit so that
$$ a_{\varphi(n)-1}(n) = -\mu(n) $$
and $a_1(n)= a_{\varphi(n)-1}(n)$ by the symmetry of the cyclotomic polynomial \cite[Lemma 2.1]{thangadurai2000coefficients}.

Also from the Fundamental Identity \eqref{fundamental-identity} we can derive the expression
$$ \Phi_n(X) = \prod_{d|n} (X^{n/d}-1)^{\mu(d)} = \prod_{d|n} (X^d-1)^{\mu(n/d)} $$
using the multiplicative version of Möbius inversion formula (see Lemma \ref{lemma-Moebius-inversion} below).

\medskip

Many of the \emph{arithmetical functions}  in multiplicative number theory  $f:\N \to R$ (where R is some 
commutative ring, usually the field $\mathbb{C}$ of complex numbers) are \emph{multiplicative} in the sense that
$$ f(m\cdot n)= f(m)\cdot f(n) \; \hbox{whenever} \; m \perp n $$ 
For instance $\varphi$ and $\mu$ have this property 
(see \cite{HR} or \cite[chapter 2]{Apostol}, and section \ref{section-phi-multiplicative} below). 

\medskip 

Another arithmetical function closely related to the cyclotomic polynomials is the \emph{Ramanujan sum} $c_n(q)$  (introduced in \cite{Ramanujan1918}), defined as the sum of the $q$-powers of the $n$-th primitive roots of the unit 
	\be c_n(q) = \sum_{\stackrel{1 \leq k \leq n}{k \perp n}} \zeta_k^q\;, \quad \zeta_k=  
e^{\frac{2\pi i k}{n}} \label{Ramanujan-sums}. \ee

As we have mentioned before 
\be \mu(n)= c_n(1) \; \hbox{for all} \; n \in \N \label{mu-Ramanujan}. \ee 

The Ramanujan sums are multiplicative as a function of $n$ (see 
\cite[Theorem 67]{HR})
$$ c_{mn}(q) = c_{m}(q) \cdot c_n(q) \;  \hbox{whenever} \; m \perp n $$
and also satisfy the following more complex multiplicative property \cite[Theorem 8.7]{Apostol}
$$ c_{mn}(ab) = c_{m}(a)\cdot c_{n}(b) \; \hbox{whenever} \; a \perp n \; \hbox{and} \; b \perp m. $$

Likewise, other arithmetical functions defined by sums involving the 
roots of the unit, like Gauss quadratic sums and Kloosterman sums, enjoy similar multiplicative properties \cite[section 5.6]{HR}. 

\medskip

A question that naturally arises is whether $\Phi_n(X)$, considered as an arithmetical function of $n \in \N$ into the ring $\mathbb{Z}[X]$ of polynomials with integral coefficients, has some property of this kind. 

\medskip

In this note, we present an elementary identity involving the cyclotomic polynomials, answering this question.

\medskip

In the first version of this article uploaded to the Arxiv, it is said that this result  seemed to be new. However, after that, I have learned that this result has previously appeared in \cite[corollary 2]{Cheng}(with a different proof). 

\begin{theorem}
Let $m$ and $n$ be coprime. Then,
\be \Phi_n(X^m) = \prod_{d|m} \Phi_{d\cdot n}(X) \label{our-identity} \ee
\label{theorem-our-identiy}
\end{theorem}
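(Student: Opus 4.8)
The plan is to compare the roots of both sides of \eqref{our-identity}, since both sides are products of cyclotomic polynomials, hence monic polynomials with integer coefficients, and two monic polynomials agree iff they have the same roots with the same multiplicities. First I would observe that the roots of $\Phi_n(X^m)$ are exactly the complex numbers $\alpha$ such that $\alpha^m$ is a primitive $n$-th root of unity. The key arithmetic lemma is then: if $\alpha^m$ is a primitive $n$-th root of unity, then $\alpha$ is a primitive $N$-th root of unity for some $N$ satisfying $n \mid N \mid mn$; and conversely. Writing $N = dn$ would be premature — one first shows $\alpha$ has some order $N$, that $\alpha^m$ has order $N/\gcd(N,m) = n$, and then uses $n \mid N$ together with $N \mid mn$ to deduce $N/n \mid m$, i.e. $N = dn$ with $d \mid m$.

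Next I would verify the degree count as a sanity check and, more importantly, to pin down multiplicities: $\deg \Phi_n(X^m) = m\,\varphi(n)$, while $\sum_{d \mid m} \deg \Phi_{dn}(X) = \sum_{d \mid m} \varphi(dn)$. Here the hypothesis $m \perp n$ enters crucially, because then $d \perp n$ for every $d \mid m$, so $\varphi(dn) = \varphi(d)\varphi(n)$ by multiplicativity of $\varphi$, and $\sum_{d \mid m}\varphi(d) = m$ gives $\sum_{d \mid m}\varphi(dn) = m\,\varphi(n)$, matching the left side. This tells me that if I can show every root of $\Phi_n(X^m)$ is a root of the right-hand product (and the right-hand factors are pairwise coprime, which is immediate since $\Phi_a$ and $\Phi_b$ share no roots for $a \ne b$), then the two sides must in fact be equal.

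So the core of the argument is the set-theoretic equality of root sets. For the inclusion $\supseteq$: if $\alpha$ is a primitive $dn$-th root of unity with $d \mid m$, then $\alpha^m$ has order $dn/\gcd(dn,m)$; using $d \mid m$ and $m \perp n$ one computes $\gcd(dn,m) = \gcd(dn, m)$, and since $d \mid m$ and $\gcd(n,m)=1$ this equals $d$, so $\alpha^m$ has order $dn/d = n$, hence $\Phi_n((\alpha)^m) = 0$. For the inclusion $\subseteq$: given $\alpha$ with $\alpha^m$ a primitive $n$-th root of unity, $\alpha$ is certainly a root of unity (it's an $mn$-th root, say), of some order $N$; then $n = $ ord$(\alpha^m) = N/\gcd(N,m)$, so $N = n\gcd(N,m)$, which forces $n \mid N$ and $N/n = \gcd(N,m) \mid m$; setting $d = N/n$ gives $\alpha$ a primitive $dn$-th root of unity with $d \mid m$. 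The main obstacle — really the only subtle point — is keeping the gcd bookkeeping honest when combining $d \mid m$, $m \perp n$, and the formula $\mathrm{ord}(\zeta^k) = \mathrm{ord}(\zeta)/\gcd(\mathrm{ord}(\zeta),k)$; once that is done carefully the two root sets coincide and, together with the degree match and pairwise coprimality of the right-hand factors, \eqref{our-identity} follows. (An alternative, perhaps cleaner, route is to take the product over $d \mid m$ of the Fundamental Identity \eqref{fundamental-identity} applied to $X^{m/d}$ in a suitable way, or to apply Möbius inversion; I would keep this in reserve in case the root-counting writeup gets unwieldy, but the root comparison is the most transparent.)
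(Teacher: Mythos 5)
Your proposal is correct, but it follows a genuinely different route from the paper. The paper argues by complete induction on $n$: the base case $n=1$ is the Fundamental Identity \eqref{fundamental-identity}, and the inductive step combines the recursion \eqref{recursion} (with $X^m$ in place of $X$), the inductive hypothesis applied to the proper divisors $d_2$ of $n$, and the unique factorization of each divisor of $mn$ as $d_1 d_2$ with $d_1\mid m$, $d_2\mid n$. Your argument instead identifies the root sets directly: the roots of $\Phi_n(X^m)$ are the $\alpha$ with $\alpha^m$ a primitive $n$-th root of unity, and the order formula $\mathrm{ord}(\alpha^k)=\mathrm{ord}(\alpha)/\gcd(\mathrm{ord}(\alpha),k)$ shows these are exactly the primitive $dn$-th roots of unity for $d\mid m$; your gcd bookkeeping is right, and you correctly locate where coprimality enters (in the inclusion $\supseteq$, via $\gcd(dn,m)=d\gcd(n,m/d)=d$ --- exactly the step that breaks in the paper's counterexample $m=2$, $n=4$). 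Your approach is more transparent about \emph{why} the identity holds and needs no induction; the paper's is purely formal, never leaves $\mathbb{Z}[X]$ beyond the definition of $\Phi_n$, and structurally mirrors the later inductive proof that \eqref{comparison-of-degree} implies multiplicativity of $\varphi$. Two small points to tidy up in a full writeup: (i) you should state explicitly that $\Phi_n(X^m)$ is squarefree (its roots are the $m$-th roots of the distinct primitive $n$-th roots of unity, and these $m$-element root sets are pairwise disjoint), since your ``one inclusion plus degree count'' shortcut needs this; with both inclusions and simple roots on both sides the degree count is not needed at all. (ii) Your degree check invokes the multiplicativity of $\varphi$, which the paper later presents as a \emph{consequence} of the theorem (Section \ref{section-phi-multiplicative}); this is not a logical error, but if you want the theorem to yield that multiplicativity non-circularly, prefer the version of your argument that proves both root-set inclusions and dispenses with the degree comparison.
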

\begin{proof}
The proof uses (complete) induction on $n$ . We will show that for each $n \in \N$, \eqref{our-identity} holds
for every $m\in \N$ such that $m \perp n$.

\medskip

Indeed for $n=1$, $\Phi_1(X)=X-1$ and \eqref{our-identity} reduces to the Fundamental Identity \eqref{fundamental-identity}.

\medskip

Next, we assume then that  \eqref{our-identity} holds for any $n^\prime<n$ in place of $n$, 
and we will show that it holds for $n$.

From \eqref{recursion} (substituting $X^m$ for $X$), we have that
$$ \Phi_n(X^m) = \frac{(X^{m})^n-1}{\prod_{d_2|n,d_2<n} \Phi_{d_2}(X^m) }, $$ 
and using the inductive hypothesis (with $n^\prime=d_2$), this can be written as 
\be \Phi_n(X^m) = \frac{(X^{m})^n-1}{\prod_{d_2|n,d_2<n} \prod_{d_1|m} \Phi_{d_1\cdot d_2}(X) }. \label{step-one} \ee 
Here we have used the fact that since $d_2|n$, $d_2$ is also coprime with $m$.

\medskip

On the other hand, from the Fundamental Identity \eqref{fundamental-identity},
$$ (X^{m})^n-1 = X^{mn} - 1 = \prod_{d|mn}  \Phi_d(X). $$

Now we observe that the Fundamental Theorem of Arithmetic implies that, since $m$ and $n$ are coprime,
each divisor $d$ of $mn$ can be uniquely decomposed as
$$ d=d_1\cdot d_2 \; \hbox{where} \; d_1|m \; \hbox{and} \; d_2|n. $$
Hence, we can write
\begin{align*}
(X^{m})^n-1 &= \prod_{d_1|m} \prod_{d_2|n} \Phi_{d_1\cdot d_2}(X) \\ 
& = \left[ \prod_{d_1|m} \prod_{d_2|n,d_2<n} \Phi_{d_1\cdot d_2}(X) \right]
\cdot \left[ \prod_{d_1|m}  \Phi_{d_1\cdot n}(X) \right]   
\end{align*}

(splitting the factor with $d_2=n$). Replacing in \eqref{step-one}, it follows that
$$ \Phi_n(X^m) = \prod_{d_1|m}  \Phi_{d_1\cdot n}(X)  $$ 
as claimed. 

By the principle of (complete) mathematical induction it follows that the theorem holds for every $n,m \in \N$.
\end{proof}

\begin{remark}
It is easily seen that \eqref{our-identity} fails if $n$ and $m$ are not coprime. For instance if $m=2$
and $n=4$
$$ \Phi_4(X)=X^2+1 \Rightarrow \Phi_n(X^m) = \Phi_4(X^2)=  X^4+1, $$
whereas
$$ \prod_{d|m} \phi_{d\cdot n}(X)= \Phi_{4}(X) \Phi_{8}(X) = (X^2+1)(X^4+1)= X^{6} + X^{4} + X^{2} + 1 $$
It is my pleasure to acknowledge that the software \emph{Sagemath} \cite{sagemath} was used to find this counterexample and to check many of the identities in this work, and to thank their developers for this wonderful tool. 
\end{remark}

\section{A dual form of the main identity}

In this section, we prove a dual form of our main indentity.

\begin{theorem}
If $n$ and $m$ are coprime,
\be \Phi_{nm}(X) = \prod_{d|m} \Phi_{n}(X^d)^{\mu(m/d)}
= \prod_{c|m} \phi_{n}(X^{m/c})^{\mu(c)} \label{dual-form} \ee
\label{theorem-dual-form}
\end{theorem}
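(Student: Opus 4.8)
The plan is to obtain \eqref{dual-form} from the main identity \eqref{our-identity} by applying the multiplicative form of the Möbius inversion formula (Lemma \ref{lemma-Moebius-inversion} in the excerpt). Fix $n$ and regard everything as a function of $m$ ranging over natural numbers coprime to $n$. Theorem \ref{theorem-our-identiy} says that
\[
F(m) := \Phi_n(X^m) = \prod_{d \mid m} \Phi_{dn}(X) =: \prod_{d \mid m} G(d),
\]
so if we set $G(d) := \Phi_{dn}(X)$, then $F$ is exactly the ``divisor product'' of $G$. Multiplicative Möbius inversion then recovers $G$ from $F$:
\[
G(m) = \prod_{d \mid m} F(d)^{\mu(m/d)},
\]
which upon substituting back the meanings of $F$ and $G$ is precisely
\[
\Phi_{nm}(X) = \prod_{d \mid m} \Phi_n(X^d)^{\mu(m/d)},
\]
the first equality in \eqref{dual-form}. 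The second equality is just the substitution $c = m/d$, which runs over the divisors of $m$ as $d$ does, and is purely formal.

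The one subtlety to address carefully is that Möbius inversion is being applied over the \emph{restricted} divisor poset: $m$ runs only over integers coprime to $n$, and we must know that for every divisor $d$ of such an $m$, the value $G(d) = \Phi_{dn}(X)$ is well-defined and that $d$ is again coprime to $n$ (so that the identity \eqref{our-identity} we are inverting genuinely holds at the argument $d$). This is immediate: any divisor of an integer coprime to $n$ is itself coprime to $n$, so the set $\{\,k \in \N : k \perp n\,\}$ is closed under taking divisors, and the divisor lattice of any particular $m$ in this set sits entirely inside it. Hence the standard proof of multiplicative Möbius inversion — or equivalently, taking the identity in the additive group of the formal-logarithm/exponent level, i.e. writing $F$ and $G$ multiplicatively in the group of nonzero rational functions and inverting in the incidence algebra — goes through verbatim; no coprimality hypothesis on the inversion mechanism itself is needed, only that the identity holds pointwise on a divisor-closed set, which it does.

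I expect the main obstacle to be essentially bookkeeping rather than a genuine mathematical difficulty: one must be explicit that the objects $\Phi_n(X^d)^{\mu(m/d)}$ live in the multiplicative group of nonzero rational functions in $\mathbb{Q}(X)$ (exponents $\mu(m/d) \in \{-1,0,1\}$ can be negative), so that Lemma \ref{lemma-Moebius-inversion} applies in that group, and then observe a posteriori that the resulting product is in fact the polynomial $\Phi_{nm}(X)$ — which it must be, since the left-hand side is. Alternatively, and perhaps more cleanly for exposition, one can avoid division altogether by proving \eqref{dual-form} directly by induction on $m$ (for fixed $n$, over $m \perp n$), using \eqref{our-identity} to peel off the $d = m$ term: $\prod_{d \mid m} \Phi_{dn}(X) = \Phi_n(X^m)$ gives $\Phi_{mn}(X) = \Phi_n(X^m) \big/ \prod_{d \mid m, d < m} \Phi_{dn}(X)$, then substitute the inductive expression for each $\Phi_{dn}(X)$ with $d < m$ and collect exponents via the defining relation $\sum_{e \mid k} \mu(e) = [k=1]$. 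Either route is short; I would present the Möbius-inversion argument as the primary proof for its brevity, with the inductive argument mentioned as an alternative that stays within $\mathbb{Z}[X]$.
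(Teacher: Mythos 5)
Your proposal is correct and follows essentially the same route as the paper: the first equality in \eqref{dual-form} is obtained from Theorem \ref{theorem-our-identiy} by the multiplicative Möbius inversion of Lemma \ref{lemma-Moebius-inversion} applied in $\mathbb{Q}(X)$, and the second is the reindexing $c=m/d$. The only cosmetic difference is that the paper extends $f(d)=\Phi_{dn}(X)$ and $g(m)=\Phi_n(X^m)$ by zero at arguments not coprime to $n$ so that the lemma applies verbatim on all of $\N$, whereas you invert over the divisor-closed set of integers coprime to $n$; both devices handle the same (and only) subtlety you correctly identified.
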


For the proof we need a the Möbius inversion formula that we state as a  lemma (see \cite[theorems 266 and 267]{HR} for a proof).

\begin{lemma}[Möbius inversion formula]
Let $f,g:\N \to R$ be two functions, where $R$ is a commutative ring. 
\begin{enumerate} 
\item[i)] (Additive form)
The relation 
$$ g(m) = \sum_{d|m} f(d)  \; \hbox{for every} \; m \in \N $$
is equivalent to
$$ f(m) = \sum_{d|m} g(d) \mu\left( \frac{m}{d} \right) = \sum_{c|m} g\left(\frac{m}{c}\right) \mu(c) \; \hbox{for every} \; m \in \N $$
\item[ii)] (Multiplicative form)
Assume that $R$ is a field. Then, the relation 
\be g(m) = \prod_{d|m} f(d) \label{hypothesis-inversion}   \; \hbox{for every} \; m \in \N \ee
is equivalent to 
\be f(m) = \prod_{d|m} g(d)^{\mu(m/d)} = \prod_{c|m} g(m/c)^{\mu(c)} \label{thesis-inversion}   \; \hbox{for every} \; m \in \N \ee 
Here we make the convention that $x^0=1$ even if $x=0$.
\end{enumerate}
\label{lemma-Moebius-inversion}
\end{lemma}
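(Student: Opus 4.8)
The plan is to build everything on the single fundamental property of the Möbius function, namely that $\sum_{d\mid n}\mu(d)$ equals $1$ when $n=1$ and $0$ when $n>1$. First I would establish this: only squarefree divisors of $n=p_1^{a_1}\cdots p_k^{a_k}$ contribute, and these correspond bijectively to the subsets of $\{p_1,\dots,p_k\}$, so the signed total is $\sum_{j=0}^{k}\binom{k}{j}(-1)^j=(1-1)^k$, which is $0$ for $k\geq 1$ and $1$ for $k=0$ (that is, $n=1$). Equivalently, one may note that $n\mapsto\sum_{d\mid n}\mu(d)$ is multiplicative and verify the claim on prime powers.

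For the additive form (i), I would prove the forward implication by direct substitution and interchange of summation. Assuming $g(m)=\sum_{d\mid m}f(d)$, I substitute into the candidate right-hand side and reorganize the double sum according to the inner index:
\[
\sum_{d\mid m} g(d)\,\mu\!\left(\tfrac{m}{d}\right)=\sum_{d\mid m}\mu\!\left(\tfrac{m}{d}\right)\sum_{e\mid d} f(e)=\sum_{e\mid m} f(e)\sum_{d:\,e\mid d\mid m}\mu\!\left(\tfrac{m}{d}\right).
\]
Writing $d=e\,t$ with $t\mid (m/e)$ turns the inner sum into $\sum_{s\mid (m/e)}\mu(s)$, which by the fundamental property equals $1$ precisely when $e=m$ and vanishes otherwise; the whole expression therefore collapses to $f(m)$. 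The converse is entirely symmetric: starting from $f(m)=\sum_{d\mid m}g(d)\mu(m/d)$, substituting into $\sum_{d\mid m}f(d)$ and swapping the order of summation produces the very same inner sum $\sum_{s\mid(m/e)}\mu(s)$, leaving only $g(m)$.

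For the multiplicative form (ii), I would transport this argument verbatim into the abelian group $(R^{\times},\cdot)$ of nonzero elements of the field $R$, with the group law written multiplicatively and the action of $\mathbb{Z}$ written as exponentiation. Since powers of a fixed base multiply by adding exponents, the bookkeeping is formally identical: assuming $g(m)=\prod_{d\mid m}f(d)$,
\[
\prod_{d\mid m} g(d)^{\mu(m/d)}=\prod_{d\mid m}\prod_{e\mid d} f(e)^{\mu(m/d)}=\prod_{e\mid m} f(e)^{\sum_{d:\,e\mid d\mid m}\mu(m/d)}=\prod_{e\mid m} f(e)^{[e=m]}=f(m),
\]
where the exponent attached to each fixed $f(e)$ is exactly the sum computed above, so it is $1$ for $e=m$ and $0$ otherwise. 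The convention $x^0=1$ is precisely what makes the penultimate product well defined: every factor with $e\neq m$ carries the zero exponent and is read as $1$ regardless of whether $f(e)$ happens to vanish. The converse direction is again symmetric.

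The main obstacle is not the combinatorics, which is one and the same identity underlying both parts, but the legitimacy of the exponent manipulation in (ii): collapsing a product of powers of $f(e)$ into a single power with an additive exponent uses $x^a x^b=x^{a+b}$ for possibly negative integers $a,b$, which is valid only when the base is invertible. This is exactly where the hypothesis that $R$ is a field enters, guaranteeing that every nonzero value lies in the group $R^{\times}$, while the $x^0=1$ convention disposes of the terms carrying a vanishing Möbius exponent. With these two points secured, the multiplicative statement is nothing more than the additive statement read inside $(R^{\times},\cdot)$.
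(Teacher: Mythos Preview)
The paper does not actually prove this lemma; it merely states it and refers the reader to Hardy--Wright, theorems 266 and 267, for a proof. Your argument is the classical one and is correct: establish $\sum_{d\mid n}\mu(d)=[n=1]$ via the binomial identity, substitute and interchange the order of summation for the additive form, and then read the same computation multiplicatively inside $R^{\times}$ for part~(ii). This is essentially the proof one finds in Hardy--Wright, so there is no alternative approach in the paper to compare against.

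One small remark: your discussion of the convention $x^{0}=1$ correctly disposes of factors carrying a vanishing M\"obius exponent, but neither the lemma as stated nor your argument addresses what happens if some $g(d)=0$ while $\mu(m/d)=-1$, in which case the product on the right of the multiplicative inversion is undefined. This is an imprecision in the statement itself (one normally assumes $f$, hence $g$, is nowhere zero for the multiplicative form) rather than a gap in your reasoning.
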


In our application of the multiplicative form of Möbius inversion formula, $R=\mathbb{Q}(x)$ is the field of rational functions with rational coefficients. Now we see that using the lemma, Theorem \ref{theorem-dual-form} follows from Theorem \ref{theorem-our-identiy} by fixing $n$ and considering
$$ f(d) = \left\{
\begin{array}{rcl}
\Phi_{dn}(x) & \hbox{if} \; d \perp n \\ 
0 &   \hbox{otherwise}
\end{array}
\right.$$

$$ g(m) =
\left\{
\begin{array}{rcl}
\Phi_n(X^m) & \hbox{if} \; m \perp n \\ 
0 &   \hbox{otherwise}
\end{array}
\right.
$$

The relation  \eqref{hypothesis-inversion} is just \eqref{our-identity} if $n \perp m$. Likewise \eqref{thesis-inversion} reduces to \eqref{dual-form} when $m \perp n$ as $d|m$ implies that $d\perp n$. If not, both sides of \eqref{hypothesis-inversion} vanish as $d=m$ is one of the divisors in the right hand side.

\medskip

Some known properties of the cyclotomic polynomial follow easily from our identity.

\begin{corollary} \cite[Corollary 2.3]{ge2008elementary}
If $p$ is a prime and $k\geq 1$ then,
$$
\Phi_{p^k \cdot n}(X) =
\left\{
\begin{array}{ll}  
 \Phi_n(X^{p^k} )&  \hbox{if} \;  p \;\hbox{divides} \;n \\
 \frac{\Phi_{n}(X^{p^k})}{\Phi_{n}(X^{p^{k-1}})} &  \hbox{if} \;  p \;\hbox{does not divide} \;n. \\
\end{array}
\right. $$

\end{corollary}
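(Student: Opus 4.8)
The plan is to derive the corollary directly from Theorem~\ref{theorem-our-identiy} by choosing $m = p^k$, which has the small, totally ordered divisor set $\{1, p, p^2, \dots, p^k\}$. I first split into the two cases according to whether $p \mid n$.

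If $p \mid n$, then $n$ and $p^k$ are \emph{not} coprime, so Theorem~\ref{theorem-our-identiy} does not apply to the pair $(n, p^k)$ directly. Instead I write $n = p^a \cdot n'$ with $a \geq 1$ and $p \nmid n'$, so that $p^k n = p^{k+a} n'$ and $n' \perp p$. Applying \eqref{our-identity} to $\Phi_{n'}(X^{p^{k+a}})$ and to $\Phi_{n'}(X^{p^a}) = \Phi_n(X)$, both sums telescope over the chain of divisors of the relevant powers of $p$; subtracting (i.e.\ taking the quotient of the two product identities) leaves exactly $\prod_{j=a+1}^{k+a} \Phi_{p^j n'}(X) = \Phi_n(X^{p^k})$, and recognizing $\Phi_{p^j n'}(X) = \Phi_{p^{j-a} n}(X)$ collapses this to the claimed $\Phi_{p^k n}(X) = \Phi_n(X^{p^k})$. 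Alternatively, and more cleanly, I can just invoke the special case $\Phi_{pn}(X) = \Phi_n(X^p)$ (valid when $p \mid n$) and iterate $k$ times.

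If $p \nmid n$, then $p^k \perp n$, so Theorem~\ref{theorem-our-identiy} applies: $\Phi_n(X^{p^k}) = \prod_{j=0}^{k} \Phi_{p^j n}(X)$ and also $\Phi_n(X^{p^{k-1}}) = \prod_{j=0}^{k-1} \Phi_{p^j n}(X)$. Taking the quotient of these two identities in the field $\mathbb{Q}(X)$ — every factor is nonzero, so the division is legitimate — immediately gives $\Phi_n(X^{p^k})/\Phi_n(X^{p^{k-1}}) = \Phi_{p^k n}(X)$, which is the desired formula. (Equivalently this is the dual identity \eqref{dual-form} with $m = p^k$, whose only nonzero Möbius terms come from $d = p^k$ and $d = p^{k-1}$.)

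The only subtle point — and the main thing to get right — is the case $p \mid n$, where the hypothesis of the main theorem fails and one must either pull the prime power out of $n$ and telescope carefully, or bootstrap from the single-prime case; everything else is a one-line quotient of two instances of \eqref{our-identity}. I would present the $p \nmid n$ case first since it is the immediate corollary, then handle $p \mid n$ by the reduction above.
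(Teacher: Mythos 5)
Your $p \nmid n$ case is correct and is in substance the paper's argument: dividing the two instances $\Phi_n(X^{p^k})=\prod_{j=0}^{k}\Phi_{p^jn}(X)$ and $\Phi_n(X^{p^{k-1}})=\prod_{j=0}^{k-1}\Phi_{p^jn}(X)$ of \eqref{our-identity} is exactly the computation one gets by specializing the dual identity \eqref{dual-form} to $m=p^k$, which is what the paper does.

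The $p \mid n$ case, however, contains a genuine gap. First, $\Phi_{n'}(X^{p^a})$ is \emph{not} equal to $\Phi_n(X)$: by \eqref{our-identity} it equals $\prod_{j=0}^{a}\Phi_{p^jn'}(X)$, of which $\Phi_n=\Phi_{p^an'}$ is only one factor (already $\Phi_1(X^2)=X^2-1\neq X+1=\Phi_2(X)$). Second, the quotient you form is $\Phi_{n'}(X^{p^{k+a}})/\Phi_{n'}(X^{p^{a}})=\prod_{j=a+1}^{k+a}\Phi_{p^jn'}(X)=\prod_{i=1}^{k}\Phi_{p^in}(X)$, and your claim that this equals $\Phi_n(X^{p^k})$ is false for every $k\geq 2$: the degrees are $(p+p^2+\cdots+p^k)\varphi(n)$ versus $p^k\varphi(n)$. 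Concretely, for $p=n=2$ and $k=2$ the quotient is $(X^8-1)/(X^2-1)=(X^2+1)(X^4+1)$, while $\Phi_2(X^4)=X^4+1$. Your fallback --- iterate the special case $\Phi_{pn}(X)=\Phi_n(X^p)$ for $p\mid n$ --- is a sound induction on $k$ once that special case is known, but the special case is precisely the $k=1$ instance of the statement being proved, and the only argument you offer for it is the flawed telescoping, so as written the step is circular. The repair, which is the paper's route, is to apply the already-established $p\nmid n$ case twice: $\Phi_{p^kn}(X)=\Phi_{p^{k+a}n'}(X)=\Phi_{n'}(X^{p^{k+a}})/\Phi_{n'}(X^{p^{k+a-1}})$, and also $\Phi_n(Y)=\Phi_{p^an'}(Y)=\Phi_{n'}(Y^{p^a})/\Phi_{n'}(Y^{p^{a-1}})$, into which one substitutes $Y=X^{p^k}$ to obtain the same ratio; the two expressions coincide, which is the claim.
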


\begin{proof} We first consider the case in which $p$ does not divide $n$. We use theorem 
\ref{theorem-dual-form} with $m=p^k$.
$$ \Phi_{p^kn}(X) =  \prod_{c|p^k} \Phi_{n}(X^{m/c})^{\mu(c)}
=  \prod_{j=0}^k \Phi_{n}(X^{m/p^j})^{\mu(p^j)}
=  \frac{\Phi_{n}(X^{p^k})}{\Phi_{n}(X^{p^{k-1}})} $$
since by the definition of the Möbius function 
$$ \mu(p^j)= \left\{
\begin{array}{lll}
1 & \hbox{for} & j=0 \\
-1 & \hbox{for} & j=1 \\
0 & \hbox{for} & j\geq 2
\end{array} 
\right.,$$
this proves the corollary in this case.
\medskip

If $p$ divides $n$, we write $n=p^j \cdot n^\prime$ where $p$ does not divide $n^\prime$. Then, using what we 
have already proved, 
$$ \Phi_{p^k \cdot n}(X) = \Phi_{p^{k+j} \cdot n^\prime}(X)
=   \frac{\Phi_{n^\prime}(X^{p^{k+j}})}{\Phi_{n^\prime}(X^{p^{k+j-1}})}. $$
Likewise
$$  \Phi_{n}(X)= \Phi_{p^k \cdot n^\prime}(X)= \frac{\Phi_{n^\prime}(X^{p^{k}})}{\Phi_{n^\prime}(X^{p^{k-1}})}. $$
Then, substituting $X^{p^j}$ for $X$,
$$ \Phi_{n}(X^{p^j}) =  \frac{\Phi_{n^\prime}(X^{p^j})^{p^{k}})}{\Phi_{n^\prime}((X^{p^j})^{p^{k-1}})}  = 
\frac{\Phi_{n^\prime}((X^{p^{k+j}})}{\Phi_{n^\prime}(X^{p^{k+j-1}})} = \Phi_{p^k \cdot n}(X) $$
as we have claimed.
\end{proof}

\section{The multiplicative property of Euler's totient function}

\label{section-phi-multiplicative}

In this section, we show how identity \eqref{our-identity} is related to the multiplicative property of $\varphi$.

We remark that comparing the degree of both sides in the Fundamental Identity \eqref{fundamental-identity} gives a well-known property of Euler's totient function
\be \sum_{d|m} \varphi(d)= n \label{property-phi}. \ee

Likewise if we compare the degree of both sides in \eqref{our-identity}, we get that 
\be \sum_{d|m} \varphi(dn) = m \varphi(n)  \; \hbox{when} \; m \perp n. \;  \label{comparison-of-degree}  \ee 

\begin{theorem}
The identity \eqref{comparison-of-degree} is equivalent to the multiplicative property of $\varphi$ 
\be \varphi(mn)= \varphi(m) \varphi(n) \; \hbox{when} \; m \perp n  \label{phi-multiplicative} \ee
in the sense that each property can be deduced from the other using \eqref{property-phi}.
\end{theorem}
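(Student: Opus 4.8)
The plan is to prove the two implications separately, using \eqref{property-phi} as the bridge in each direction, and to keep everything inside the range where the hypotheses are available by exploiting the elementary fact that every divisor of $m$ is again coprime with $n$ whenever $m \perp n$ (a common prime factor of such a divisor and $n$ would be a common prime factor of $m$ and $n$).

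First I would show that \eqref{phi-multiplicative} implies \eqref{comparison-of-degree}. Fix coprime $m$ and $n$. For each $d \mid m$ we have $d \perp n$, so \eqref{phi-multiplicative} gives $\varphi(dn)=\varphi(d)\varphi(n)$; summing over the divisors of $m$ and pulling $\varphi(n)$ out,
\[ \sum_{d \mid m} \varphi(dn) = \varphi(n)\sum_{d \mid m}\varphi(d) = m\,\varphi(n), \]
the last equality being \eqref{property-phi}. This is exactly \eqref{comparison-of-degree}.

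For the converse I would argue by complete induction on $m$, with $n$ fixed and coprime to it, that \eqref{comparison-of-degree} forces $\varphi(mn)=\varphi(m)\varphi(n)$. The case $m=1$ is immediate. Assuming the claim for every proper divisor $d$ of $m$ (each of which is coprime with $n$, so the inductive hypothesis applies), I would isolate the top term $d=m$ in \eqref{comparison-of-degree}:
\[ \varphi(mn) = m\,\varphi(n) - \sum_{\substack{d \mid m \\ d < m}}\varphi(dn) = m\,\varphi(n) - \varphi(n)\!\!\sum_{\substack{d \mid m \\ d < m}}\!\!\varphi(d), \]
using the inductive hypothesis in the second step. Since \eqref{property-phi} gives $\sum_{d \mid m,\, d<m}\varphi(d) = m - \varphi(m)$, the right-hand side equals $m\varphi(n) - \varphi(n)(m-\varphi(m)) = \varphi(m)\varphi(n)$, which closes the induction; by the principle of complete induction \eqref{phi-multiplicative} holds for all coprime $m,n$.

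I do not expect a serious obstacle here: the computation is essentially bookkeeping. The one point deserving attention — and it is the same point in both directions — is that divisors of $m$ inherit coprimality with $n$, which is what keeps the manipulated quantities inside the domain where \eqref{comparison-of-degree} (resp.\ \eqref{phi-multiplicative}) may be invoked. As an alternative to the induction in the converse, one could instead apply the additive Möbius inversion formula of Lemma \ref{lemma-Moebius-inversion}, observing that the functions $d \mapsto \varphi(dn)$ and $d \mapsto \varphi(d)\varphi(n)$ have the same summatory function $m \mapsto m\varphi(n)$ on the integers coprime to $n$ and are therefore equal; but the direct induction above is more transparent and self-contained.
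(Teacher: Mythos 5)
Your proposal is correct and follows essentially the same route as the paper: the forward direction by summing $\varphi(dn)=\varphi(d)\varphi(n)$ over divisors and applying \eqref{property-phi}, and the converse by complete induction on $m$, isolating the $d=m$ term and using $\sum_{d\mid m,\,d<m}\varphi(d)=m-\varphi(m)$. The key observation that divisors of $m$ inherit coprimality with $n$ is exactly the point the paper also relies on, so there is nothing to add.
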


\begin{proof}
Assume first that that $\varphi$ is multiplicative. Then  \eqref{comparison-of-degree} follows easily from
\eqref{property-phi} since $d|m \Rightarrow d\perp n$. Therefore,
$$  \sum_{d|m} \varphi(dn) = \sum_{d|m} \varphi(d) \varphi(n) = \varphi(n)  \sum_{d|m} \varphi(d) = m \varphi(n).$$  
On the other hand, assume that \eqref{comparison-of-degree} holds. We will show \eqref{phi-multiplicative} holds
by induction on $m$ (for every $n$ coprime with $m$). For $m=1$, it holds trivially since $\varphi(1)=1$.
Assume then \eqref{phi-multiplicative} holds for any $m^\prime<m$. Then using 
\eqref{comparison-of-degree}
$$ \sum_{d|m,d<m} \varphi(dn) + \varphi(nm) = m \varphi(n) $$
Since $d|m \Rightarrow d\perp n$ and since $d<m$, we deduce using the induction hypothesis that
$$ \sum_{d|m,d<m} \varphi(d) \varphi(n) + \varphi(nm) = m \varphi(n) $$
or
$$  \varphi(n) \sum_{d|m,d<m} \varphi(d) + \varphi(nm) = m \varphi(n) $$
But \eqref{property-phi} gives
$$ \sum_{d|m,d<m} \varphi(d) = m- \varphi(m) $$
Therefore
$$  \varphi(n) [m-\varphi(m)] + \varphi(nm) =  m \varphi(n) \Rightarrow \varphi(n) \varphi(m) = \varphi(nm) $$
By the principle of (complete) mathematical induction it follows that \eqref{phi-multiplicative} holds for every $m,n\in \N$.
\end{proof}

\section{Ramanujan sums}

In this section, we will apply \eqref{our-identity} to the Ramanujan sums \eqref{Ramanujan-sums}, and 
deduce a formula for computing the coefficients of the cyclotomic polynomials. 

We will make use of the logarithmic derivative operator
$$ L[P] = \frac{P^\prime}{P} $$
on polynomials. We observe that it has the fundamental property 
\be L[P\cdot Q] = L[P] + L[Q] \label{property-L} \ee

We will also use the method of generating functions. We need the following lemma (taken from \cite[appendix III
to chapter X]{RP}):
\begin{lemma}
Let $P\in \mathbb{C}[X]$ be a polynomial of degree $N$ with complex coefficients, 
$$ P(z)= \sum_{j=0}^N a_j z^j \quad \hbox{with} \; a_n \neq 0.  $$
Let $\rho_1,\rho_2, \ldots, \rho_N$ be the roots of $P$ (repeated according to their multiplicity) 
and let
$$ S_q = S_q[P] := \rho_1^q + \rho_2^q + \ldots + \rho_{N}^q $$
be the sum of its $q$-powers. Then $L[P]$ has the following Laurent expansion
\be L[P](z) =\frac{P^\prime(z)}{P(z)} =  \sum_{q=0}^\infty \frac{S_q}{z^{q+1}} \label{Laurent-expansion} \ee
for $|z|> M=\max_{1\leq j \leq N} |\rho_j|$.
\label{lemma-RP1}
\end{lemma}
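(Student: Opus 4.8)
The plan is to prove Lemma \ref{lemma-RP1} by a direct computation of the logarithmic derivative from the factored form of $P$, followed by a geometric series expansion valid in the stated region. First I would write $P(z) = a_N \prod_{j=1}^N (z - \rho_j)$, which is legitimate since $a_N \neq 0$ and $P$ has degree $N$. Applying the logarithmic derivative and using its additive property \eqref{property-L} (together with the fact that $L$ annihilates the constant $a_N$), I get
$$ L[P](z) = \sum_{j=1}^N \frac{1}{z - \rho_j}. $$
This is the key structural identity; everything else is manipulation of the individual terms.

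Next I would expand each summand as a geometric series. For $|z| > M = \max_j |\rho_j|$ we have $|\rho_j/z| < 1$, so
$$ \frac{1}{z - \rho_j} = \frac{1}{z} \cdot \frac{1}{1 - \rho_j/z} = \sum_{q=0}^\infty \frac{\rho_j^q}{z^{q+1}}, $$
with the series converging absolutely. Summing over $j$ and interchanging the two (finite-over-infinite) sums — justified by absolute convergence, since there are only finitely many $j$ — yields
$$ L[P](z) = \sum_{j=1}^N \sum_{q=0}^\infty \frac{\rho_j^q}{z^{q+1}} = \sum_{q=0}^\infty \frac{1}{z^{q+1}} \sum_{j=1}^N \rho_j^q = \sum_{q=0}^\infty \frac{S_q}{z^{q+1}}, $$
which is exactly \eqref{Laurent-expansion}. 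Here one should note the convention $\rho_j^0 = 1$ so that $S_0 = N$, consistent with the leading term $N/z$ of $P'/P$.

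I expect the main (and really only) obstacle to be a bookkeeping one: making sure the factorization accounts correctly for repeated roots and for the leading coefficient $a_N$, and being careful that the Laurent series is genuinely the expansion valid in the exterior region $|z| > M$ rather than near $z = 0$. Since the interchange of sums involves only a finite outer sum, no delicate analytic estimate is needed — absolute convergence of each geometric series suffices. One could alternatively phrase the argument purely formally at the level of power series in $1/z$ and then invoke convergence at the end, but the direct estimate is cleaner and self-contained.
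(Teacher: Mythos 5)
Your proposal is correct and follows essentially the same route as the paper: factor $P$ as $a_N\prod_j(z-\rho_j)$, use the additivity of $L$ to get $\sum_j 1/(z-\rho_j)$, expand each term as a geometric series for $|z|>M$, and swap the finite and infinite sums by absolute convergence. No gaps.
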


\begin{proof}
We have that
$$ P = a_n \; (z-\rho_1)(z-\rho_2) \ldots (z-\rho_N) $$
Using \eqref{property-L}, we have that
$$ L[P](z) = \sum_{j=1}^N \frac{1}{z-\rho_j} $$
The lemma follows by expanding each term in a geometric series
$$ \frac{1}{z-\rho_j} = \frac{1}{z} \cdot \frac{1}{1-\left(\rho_j/z \right) } = 
\frac{1}{z} \sum_{q=0}^\infty \left( \frac{\rho_j}{z}\right)^q =
\sum_{q=0}^\infty \frac{\rho_j^q}{z^{q+1}} \; \hbox{for} \; |z|>|\rho_j| $$
and adding the results (which is legitimate for $|z|>M$ by the absolute convergence of the series).
\end{proof}

By applying this lemma to the cyclotomic polynomial $\Phi_n(z)$ we immediately get 

\begin{corollary}
We have the following Laurent expansion for the logarithmic derivative of the cyclotomic polynomials:
$$ L[\Phi_n](z) = \frac{\Phi^\prime_n(z)}{\Phi_n(z)} =  \sum_{q=0}^\infty \frac{c_n(q)}{z^{q+1}} \; \hbox{for} \; |z|>1. $$
\end{corollary}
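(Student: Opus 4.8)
The plan is to apply Lemma \ref{lemma-RP1} directly with $P = \Phi_n$. First I would record the relevant data: $\Phi_n$ is monic of degree $N = \varphi(n)$, so in particular its leading coefficient is nonzero and the hypothesis of the lemma is met. By the very definition of the cyclotomic polynomial, the roots $\rho_1, \ldots, \rho_N$ (listed with multiplicity, though here they are of course distinct) are exactly the primitive $n$-th roots of unity $\zeta_k = e^{2\pi i k/n}$ with $1 \le k \le n$ and $k \perp n$.

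Next I would observe that each such root satisfies $|\zeta_k| = 1$, so the quantity $M = \max_{1 \le j \le N} |\rho_j|$ appearing in Lemma \ref{lemma-RP1} equals $1$. Hence the Laurent expansion \eqref{Laurent-expansion} is valid precisely in the region $|z| > 1$, which matches the statement of the corollary.

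Finally I would identify the power sums $S_q = S_q[\Phi_n] = \rho_1^q + \cdots + \rho_N^q = \sum_{1 \le k \le n,\ k \perp n} \zeta_k^q$, and note that this is by definition the Ramanujan sum $c_n(q)$ from \eqref{Ramanujan-sums}. Substituting $S_q = c_n(q)$ into \eqref{Laurent-expansion} yields the claimed formula. There is essentially no obstacle here: the only points worth a word are the bookkeeping that $M = 1$ (so the domain of validity is $|z| > 1$) and the recognition that the $q$-th power sum of the primitive roots of unity is literally the defining expression for $c_n(q)$; everything else is a direct quotation of Lemma \ref{lemma-RP1}.
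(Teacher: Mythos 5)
Your proposal is correct and matches the paper exactly: the paper obtains this corollary as an immediate application of Lemma \ref{lemma-RP1} to $P=\Phi_n$, whose roots are the primitive $n$-th roots of unity (so $M=1$) and whose power sums $S_q$ are by definition the Ramanujan sums $c_n(q)$. You have simply written out the details that the paper leaves implicit.
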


\begin{remark}
Let $P$ be a polynomial and let $Q(z)=P(z^m)$. Then 
$$ L[Q](z) = m\; z^{m-1} \; L[P](z^m). $$
\end{remark}

We are ready to see how property \eqref{our-identity} applies to the Ramanujan sums:

\begin{proposition}
\eqref{our-identity} implies that if $n \perp m$, 
$$\sum_{d|m} c_{dn}(q) 
= \left\{ \begin{array}{ll}
 m \cdot c_n(q/m)  & \hbox{if} \; m|q \\
  0  & \hbox{otherwise} 
\end{array} \right. $$
\end{proposition}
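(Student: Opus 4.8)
The plan is to take the logarithmic derivative of both sides of the identity \eqref{our-identity} and compare Laurent expansions coefficient by coefficient. Start from $\Phi_n(X^m) = \prod_{d|m} \Phi_{dn}(X)$, which holds whenever $m \perp n$. Applying $L$ to the right-hand side and using the additivity property \eqref{property-L} gives $L\left[\prod_{d|m}\Phi_{dn}\right](z) = \sum_{d|m} L[\Phi_{dn}](z)$, and by the Corollary following Lemma \ref{lemma-RP1} this equals $\sum_{d|m}\sum_{q=0}^\infty \frac{c_{dn}(q)}{z^{q+1}} = \sum_{q=0}^\infty \frac{1}{z^{q+1}}\left(\sum_{d|m} c_{dn}(q)\right)$, valid for $|z|>1$.

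For the left-hand side, I would use the Remark immediately preceding the proposition: if $Q(z) = P(z^m)$ then $L[Q](z) = m z^{m-1} L[P](z^m)$. With $P = \Phi_n$ this yields $L[\Phi_n(\cdot^m)](z) = m z^{m-1} L[\Phi_n](z^m) = m z^{m-1} \sum_{q=0}^\infty \frac{c_n(q)}{z^{m(q+1)}} = m \sum_{q=0}^\infty \frac{c_n(q)}{z^{mq+1}}$, again valid for $|z|>1$ since $|z^m|>1$. Now I reindex: the exponent $mq+1$ in the denominator means this series only has terms $z^{-(N+1)}$ with $N \equiv 0 \pmod m$, and writing $N = mq$ the coefficient of $z^{-(N+1)}$ is $m\, c_n(N/m)$; for $N$ not divisible by $m$ the coefficient is $0$.

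The final step is to equate coefficients of $z^{-(q+1)}$ in the two Laurent expansions, which is legitimate because a Laurent series convergent on $|z|>1$ has unique coefficients. This gives exactly $\sum_{d|m} c_{dn}(q) = m\, c_n(q/m)$ when $m \mid q$ and $\sum_{d|m} c_{dn}(q) = 0$ otherwise, which is the claim.

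I do not anticipate a genuine obstacle here; the only points requiring a little care are the bookkeeping in the reindexing on the left-hand side (making sure the "otherwise" case is correctly read off from the gaps in the series) and noting that all the Laurent expansions involved converge on the common region $|z|>1$, so that term-by-term comparison and the term-by-term application of $L$ to the finite product are both justified. One could alternatively avoid generating functions entirely and argue directly from the definition of $c_n(q)$ as a sum over primitive roots together with the partition of the $(dn)$-th primitive roots, but the logarithmic-derivative route is cleaner given the machinery already set up in this section.
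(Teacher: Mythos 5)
Your proposal is correct and follows essentially the same route as the paper: take the logarithmic derivative of \eqref{our-identity}, use the additivity of $L$ and the remark $L[P(\cdot^m)](z)=m\,z^{m-1}L[P](z^m)$, expand both sides as Laurent series in $|z|>1$ via the corollary to Lemma \ref{lemma-RP1}, and equate coefficients by uniqueness. Your reindexing ($m\,z^{m-1}\cdot z^{-m(q+1)}=z^{-(mq+1)}$) matches the paper's condition $(r+1)m-(m-1)=q+1\Leftrightarrow rm=q$ exactly.
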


\begin{proof}
We consider the identity in Theorem \ref{theorem-our-identiy}. By taking the logarithmic derivative on both sides 
and using the previous remark, we get for $|z|>1$,
$$ m \; z^{m-1} L[\Phi_n](z^m) = \sum_{d|m} L[\Phi_{d\cdot n}](z). $$
We expand each side in a Laurent series
$$ \sum_{r=0}^\infty \frac{m \; c_n(r)}{z^{(r+1)m-(m-1)}} = \sum_{d|m} \sum_{q=0}^\infty \frac{c_{dn}(q)}{z^{q+1}}
=  \sum_{q=0}^\infty \left( \sum_{d|m} c_{dn}(q)  \right) \frac{1}{z^{q+1}}. $$
By the uniqueness of the Laurent expansion,
$$  \sum_{d|m} c_{dn}(q) = m \cdot c_n(r)  $$
when $ (r+1)m-(m-1) = q+1 \Leftrightarrow rm = q$, and that the sum is zero otherwise.
\end{proof}

\begin{remark}
When $q=0$ this property reduces to \eqref{comparison-of-degree}, since $c_n(0)=\varphi(n)$. 
\end{remark}

As before, using the additive version of Möbuis inversion formula, we get
\begin{corollary}
If $n \perp m$, 
$$ c_{mn}(q) = \sum_{d|\gcd(m,q)} d\; c_n\left(\frac{q}{d}\right) \; \mu\left(  \frac{m}{d} \right).  $$
\end{corollary}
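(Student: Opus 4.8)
The plan is to apply the additive Möbius inversion formula (Lemma \ref{lemma-Moebius-inversion}, part i) to the identity established in the preceding proposition, exactly as the text hints. Fix $n$, and regard $m$ as the variable of inversion. I would set
$$ g(m) = \begin{cases} m\, c_n(q/m) & \text{if } m \mid q,\ m\perp n \\ 0 & \text{otherwise}\end{cases}, \qquad f(m) = \begin{cases} c_{mn}(q) & \text{if } m\perp n \\ 0 & \text{otherwise.}\end{cases} $$
The proposition says precisely that $g(m) = \sum_{d\mid m} f(d)$ for every $m$ coprime to $n$; and when $m$ is not coprime to $n$ both sides vanish (on the left by definition of $g$; on the right because $d=m$ is among the divisors and $f(m)=0$, while for other $d\mid m$ with $\gcd(d,n)>1$ one also gets $f(d)=0$, so the whole sum is $0$ — actually one must be slightly careful here, see below). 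So the hypothesis of additive Möbius inversion holds for all $m\in\N$, and the conclusion gives
$$ f(m) = \sum_{c\mid m} g(c)\,\mu(m/c) $$
for all $m$. Taking $m$ coprime to $n$ turns the left side into $c_{mn}(q)$, and on the right only those $c\mid m$ with $c\mid q$ contribute a nonzero $g(c)$ (all such $c$ are automatically coprime to $n$ since $c\mid m$), i.e. the sum ranges over $c\mid\gcd(m,q)$, and $g(c)=c\,c_n(q/c)$ there. This yields
$$ c_{mn}(q) = \sum_{c\mid \gcd(m,q)} c\, c_n(q/c)\,\mu(m/c), $$
which is the claimed formula (renaming $c$ as $d$).

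The key steps, in order, are: (1) verify that the proposition's identity is literally the relation $g(m)=\sum_{d\mid m}f(d)$ for $m\perp n$; (2) check the relation also holds trivially when $\gcd(m,n)>1$ so that Lemma \ref{lemma-Moebius-inversion}(i) applies unconditionally; (3) apply the inversion formula; (4) specialize to $m\perp n$ and simplify the divisor range using the fact that a nonzero summand forces $c\mid q$ and that $c\mid m$ already forces $c\perp n$. No computation beyond bookkeeping of divisor conditions is involved.

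The main obstacle — really the only subtle point — is step (2), confirming that the hypothesis of the inversion lemma holds for \emph{every} $m$, including those sharing a factor with $n$. The proposition as stated only asserts the identity for $m\perp n$, so one needs the observation that when $\gcd(m,n)>1$, the sum $\sum_{d\mid m} f(d)$ still vanishes: indeed $f(d)=0$ unless $d\perp n$, so the sum collapses to $\sum_{d\mid m,\, d\perp n} c_{dn}(q)$, and one must argue this equals $g(m)=0$. This is not automatic from the proposition; however it follows by applying the proposition with $m$ replaced by the largest divisor $m'$ of $m$ that is coprime to $n$ — wait, that's not quite a divisor-closed argument either. A cleaner route is to only claim the inversion pair holds for $m$ ranging over the multiplicatively-closed set of integers coprime to $n$ (divisor-closed within that set, since $d\mid m\perp n\Rightarrow d\perp n$), and note that Möbius inversion works verbatim on such a "divisor-closed" subset; alternatively, mirror the device used for Theorem \ref{theorem-dual-form}, defining $f,g$ to be $0$ off the coprime-to-$n$ integers and checking directly that $g(m)=\sum_{d\mid m}f(d)$ holds for all $m$ because for $\gcd(m,n)>1$ the right side is an empty-of-nonzero-terms sum only after one knows $\sum_{d\mid m,\,d\perp n}c_{dn}(q)=0$. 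The safest presentation is the first: restrict attention throughout to $m\perp n$, invoke Möbius inversion on that divisor-stable index set, and keep the final divisor condition as $d\mid\gcd(m,q)$.
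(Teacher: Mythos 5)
Your proposal is correct and follows essentially the same route as the paper, which simply invokes the additive M\"obius inversion formula ``as before'' on the preceding proposition. Your extra care in step (2) is in fact warranted: the paper's earlier device for Theorem \ref{theorem-dual-form} (both sides vanish because the factor $d=m$ is zero) only works for products, not sums, so your final resolution --- performing the inversion on the divisor-closed set of integers coprime to $n$, where the standard proof of Lemma \ref{lemma-Moebius-inversion}(i) goes through verbatim --- is the right way to make the argument airtight.
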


In particular, if we choose $n=1$, $c_1(q/d)=1$ and we get the following known explicit formula for the Ramanujan sums due to Kluyver \cite{kluyver1906} See also \cite[Theorem 8.6]{Apostol}
\be c_{m}(q) = \sum_{d|\gcd(m,q)} d\; \mu\left(\frac{m}{d}\right) \quad \forall m \in \N  \label{explicit-Ramanujan}. 
\ee
Another explicit formula for the Ramanujan sums is 
\be c_m(q) = \frac{\mu\left( \frac{m}{\gcd(m,q)}\right) \varphi(m)}{\varphi\left( \frac{m}{\gcd(m,q)}\right)} 
\label{explicit-Ramanujan2} \ee 
The function on the right hand side was initially studied by von Sterneck \cite{von1902sitzungsber}. Later, Kluyver \cite{kluyver1906} and also Hölder \cite{holder1936theorie} proved that it coincides with the Ramanujan sums. See \cite{Ramanujan-supercharacters} for more information on the Ramanujan sums and their history.

\medskip

We conclude this note by explaining how the coefficients of the cyclotomic polynomial $\Phi_n(X)$ can be recursively computed using the Ramanujan sums, without the need of factoring polynomials.

\begin{lemma}[Newton Relations]
Let $P\in \mathbb{C}[X]$ of degree $N$ and consider the sums $S_q$ of the $q$-powers of its roots as in lemma \ref{lemma-RP1}. Then the coefficients $a_j$ of $P$ are related to the sums $S_q$ by:
$$ a_{N-\ell} =  -\frac{1}{\ell} \; \sum_{j=0}^{\ell-1} a_{N-j} \cdot S_{\ell-j}  \; \hbox{for} \; j=1,1,2,\ldots, N-1 $$
\end{lemma}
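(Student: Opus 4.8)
The plan is to obtain Newton's identities directly from the Laurent expansion in Lemma \ref{lemma-RP1}, which already carries out all the analytic work. Since $L[P]=P'/P$, that lemma is the statement that, for $|z|>M$,
\[ P'(z) = P(z)\cdot\sum_{q=0}^{\infty}\frac{S_q}{z^{q+1}}, \]
and the whole proof consists in expanding the right-hand side and matching it against $P'(z)=\sum_{j=1}^{N} j\,a_j z^{j-1}$ coefficient by coefficient.

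First I would multiply the two series out,
\[ \Big(\sum_{j=0}^{N} a_j z^{j}\Big)\Big(\sum_{q=0}^{\infty} S_q z^{-q-1}\Big) = \sum_{j=0}^{N}\sum_{q=0}^{\infty} a_j S_q\, z^{\,j-q-1}, \]
and fix $\ell$ with $1\le\ell\le N-1$. Extracting the coefficient of $z^{N-\ell-1}$ (a nonnegative power in this range, hence genuinely visible to $P'$) from $P'(z)$ gives $(N-\ell)\,a_{N-\ell}$; extracting it from the product forces $j-q-1=N-\ell-1$, i.e. $q=j-N+\ell$ with $q\ge 0$, and writing $j=N-i$ this contributes $\sum_{i=0}^{\ell} a_{N-i}\,S_{\ell-i}$.

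Next I would single out the term $i=\ell$, which equals $a_{N-\ell}\,S_0$. The one small point to record is that $S_0=N$ — either directly, $S_0=\rho_1^0+\cdots+\rho_N^0=N$ (with the convention $0^0=1$), or cleanly because $z\,P'(z)/P(z)\to\deg P=N$ as $z\to\infty$. Equating the two expressions for the coefficient of $z^{N-\ell-1}$ then gives
\[ (N-\ell)\,a_{N-\ell} = \sum_{i=0}^{\ell-1} a_{N-i}\,S_{\ell-i} + N\,a_{N-\ell}, \]
and solving for $a_{N-\ell}$ yields exactly $a_{N-\ell} = -\frac{1}{\ell}\sum_{j=0}^{\ell-1} a_{N-j}\,S_{\ell-j}$ after relabelling $i$ as $j$.

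I do not expect a genuine obstacle here: once Lemma \ref{lemma-RP1} is available, the argument is pure bookkeeping. The only things to be careful about are the range of $\ell$ (so that $z^{N-\ell-1}$ is a coefficient $P'$ can see, which is why the natural range is $1\le\ell\le N-1$ as stated; matching instead the coefficient of $z^{-1}$, which vanishes since $\deg P'=N-1$, would extend the formula to $\ell=N$), the evaluation $S_0=N$, and — if one wishes to be scrupulous about convergence rather than working with formal series — the fact that the expansion in Lemma \ref{lemma-RP1} converges absolutely for $|z|>M$, which legitimizes the term-by-term multiplication and rearrangement.
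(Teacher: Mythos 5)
Your proof is correct and follows essentially the same route as the paper: the paper also obtains the relations by rewriting the Laurent expansion of Lemma \ref{lemma-RP1} as $P'(z) = P(z)\cdot\sum_{q\ge 0} S_q z^{-q-1}$ and equating coefficients, though it leaves the bookkeeping to the cited reference. You carry out that bookkeeping explicitly (including the key evaluation $S_0=N$), which is exactly the intended argument.
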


This result follows from Lemma \ref{lemma-RP1} by writting \eqref{Laurent-expansion} as
$$  \sum_{r=1}^N r \; a_r  \; z^{r-1} = \left( \sum_{j=0}^N a_j z^{j} \right) \cdot
\left( \sum_{q=0}^\infty \frac{S_q}{z^{q+1}} \right) $$
and equating the coefficients on both sides. See \cite[appendix III to chapter X]{RP} for details\footnote{Beware that 
in this book the notation for the coefficient of $X^j$ in $P$ is $a_{N-j}$ instead of $a_j$.}.

\begin{corollary} 
Let $\Phi_n(X)$ be the cyclotomic polynomial. Its coefficients $a_j(n)$ (for $0\leq j \leq N=\varphi(n)$)  
can be recursively computed in terms of the Ramanujan sums using the relation
$$ a_{N-\ell}(n) =  -\frac{1}{\ell} \; \sum_{j=0}^{\ell-1} a_{N-j}(n) \cdot c_n(\ell-j)  \quad \hbox{for} \; \ell= 1,2,\ldots,N-1, $$
starting from
$$ a_{N}=1 $$
Together with \eqref{explicit-Ramanujan} or \eqref{explicit-Ramanujan2} these formulas provide an algorithm for computing $\Phi_n(X)$ without the need of dividing polynomials. 
\end{corollary}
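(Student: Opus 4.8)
The statement is essentially an immediate specialization of the Newton relations (the Lemma just above) to $P = \Phi_n(X)$, so the plan is to verify that the hypotheses of that Lemma are met and then translate its conclusion into the language of Ramanujan sums. First I would recall that $\Phi_n(X)$ is a monic polynomial of degree $N = \varphi(n)$ with complex (indeed integer) coefficients, so the Newton relations apply with the leading coefficient $a_N(n) = 1$. This explains the initial condition $a_N = 1$.

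The key step is identifying the power sums $S_q$ of the roots of $\Phi_n(X)$ with the Ramanujan sums $c_n(q)$. By definition the roots of $\Phi_n(X)$ are exactly the primitive $n$-th roots of unity $\zeta_k = e^{2\pi i k/n}$ with $k \perp n$, each with multiplicity one, so
$$ S_q = S_q[\Phi_n] = \sum_{\stackrel{1 \leq k \leq n}{k \perp n}} \zeta_k^q = c_n(q), $$
which is precisely the definition \eqref{Ramanujan-sums}. (Alternatively, this is exactly the content of the Corollary to Lemma \ref{lemma-RP1}, where the Laurent coefficients of $L[\Phi_n]$ were identified as $c_n(q)$, combined with Lemma \ref{lemma-RP1} itself which identifies those same coefficients with $S_q$.) Substituting $S_{\ell-j} = c_n(\ell-j)$ and $a_j = a_j(n)$ into the Newton relation
$$ a_{N-\ell} = -\frac{1}{\ell} \sum_{j=0}^{\ell-1} a_{N-j} \cdot S_{\ell-j} \quad \text{for } \ell = 1, 2, \ldots, N-1 $$
yields the displayed recursion verbatim.

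Finally, to justify the closing remark that this gives an algorithm free of polynomial division, I would observe that every quantity on the right-hand side is available at the time it is needed: the recursion computes $a_{N-\ell}(n)$ from $a_N(n), a_{N-1}(n), \ldots, a_{N-\ell+1}(n)$, which have already been produced in earlier steps (starting from $a_N(n) = 1$), while the Ramanujan sums $c_n(\ell - j)$ are given in closed form by either \eqref{explicit-Ramanujan} or \eqref{explicit-Ramanujan2}, each of which involves only $\gcd$ computations and evaluations of $\mu$ and $\varphi$ on divisors of $n$. No step requires dividing one polynomial by another, in contrast with the naive recursion \eqref{recursion}.

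I do not anticipate a genuine obstacle here; the only point requiring a moment's care is the bookkeeping of indices, namely that $\ell$ ranges over $1, \ldots, N-1$ (one relation for each non-leading coefficient below $a_N$), and that the inner sum has exactly $\ell$ terms indexed by $j = 0, \ldots, \ell-1$. This matches the index range in the Newton Relations lemma exactly, so the corollary follows with no further work.
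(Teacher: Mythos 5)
Your proof is correct and matches the paper's (implicit) argument exactly: the corollary is obtained by specializing the Newton Relations lemma to $P=\Phi_n(X)$, using that $\Phi_n$ is monic of degree $N=\varphi(n)$ and that the power sums of its roots (the primitive $n$-th roots of unity) are by definition the Ramanujan sums $c_n(q)$. Nothing further is needed.
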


More information on the coefficients of cyclotomic polynomials and their relations to other arithmetical functions can be found in \cite{herrera2020coefficients}. 
We also refer those readers who are interested in efficient algorithms for the computation of cyclotomic 
polymonials to \cite{Arnold-Monagan} and \cite{Brent}.

\medskip

\printbibliography

\end{document}